\documentclass{IEEEtran}
\usepackage{cite}
\usepackage{amsmath,amssymb,amsfonts}
\usepackage{algorithmic}
\usepackage{graphicx}
\usepackage{subfig}
\usepackage{textcomp}
\usepackage{lipsum}

\usepackage{color,theorem}
\newtheorem{theorem}{Theorem}
\newtheorem{lemma}{Lemma}

\newtheorem{remark}{Remark}

\newtheorem{proof}{Proof}

\usepackage{tikz-network}
\usepackage{tikz}

\usepackage{amssymb}
\usepackage{amsmath}
\usepackage{mathrsfs}
\usepackage{algorithm}
\usepackage{algorithmic}
\usepackage{bm}
\usepackage{xcolor}
\usepackage{color}
\usepackage{pgfplots}
\usepackage{theorem}
\usepackage{bbm}
\usepackage{balance}

\def\BibTeX{{\rm B\kern-.05em{\sc i\kern-.025em b}\kern-.08em
		T\kern-.1667em\lower.7ex\hbox{E}\kern-.125emX}}
\begin{document}
	\title{A Bi-level Globalization Strategy
		for Non-convex Consensus ADMM and ALADIN }
		\author{Xu Du*\thanks{*Corresponding author},  Jingzhe Wang,  Xiaohua Zhou  and Yijie Mao 
			\thanks{Xu Du, is  with the Institute of Mathematics HNAS, Henan Academy of
				Science, Zhengzhou, China. (e-mail: duxu@hnas.ac.cn).}
			\thanks{Jingzhe Wang is with the Department of Informatics \& Networked
				Systems, School of Computing and Information, University of Pittsburgh,
				Pittsburgh, PA, USA.  (e-mail: jiw148@pitt.edu).}
			\thanks{Xiaohua Zhou and Yijie Mao are with  the School of Information Science
				and Technology, ShanghaiTech University, China.  (e-mail: \{zhouxh3,maoyj\}@shanghaitech.edu.cn).}
		}
		
		\maketitle
		
		\begin{abstract}
			
			In this paper, we formally analyze global convergence in the realm of distributed consensus optimization. Current solutions have explored such analysis, particularly focusing on consensus alternating direction method of multipliers (C-ADMM), including convex and non-convex cases. While such efforts on non-convexity offer elegant theory guaranteeing global convergence, they entail strong assumptions and complicated proof techniques that are increasingly pose challenges when adopted to real-world applications. To resolve such tension, we propose a novel bi-level globalization strategy that not only guarantees global convergence but also provides succinct proofs, all while requiring mild assumptions. We begin by adopting such a strategy to perform global convergence analysis for the non-convex cases in C-ADMM. Then, we employ our proposed strategy in consensus augmented Lagrangian based alternating direction inexact Newton method (C-ALADIN), a more recent and generalization of C-ADMM. Surprisingly, our analysis shows that C-ALADIN globally converges to local optimizer, complementary to the prior work on C-ALADIN, which had primarily focused on analyzing local convergence for non-convex cases.


			

		\end{abstract}
		
		\begin{IEEEkeywords}
			Distributed Consensus Optimization, Non-convex, Globalization, C-ADMM, C-ALADIN
		\end{IEEEkeywords}
		\section{Introduction}

		Alternating direction method of multipliers (ADMM) is the most well-known algorithm within the realm of distributed optimization.
		The primary idea of ADMM is to decompose a complex optimization problem into simpler sub-problems, each can be solved in parallel by different agents.  
		Since its first proposal in \cite{glowinski1975approximation,Gabay1976}, there has been much effort to develop ADMM. {\color{black}{ Such efforts have mainly focused on the following two lines of research.}}
		The first line of work, including but not limited to \cite{wen2010,boyd2011distributed}, has explored potential applications that could harness the advantages of ADMM, such as machine learning \cite{zhou2022federated}, signal processing \cite{Boyd2011}, wireless communication \cite{xu2021rate}. To enrich the theoretical ground of ADMM, the second line of work, represented by \cite{hong2016convergence,he20121,Shi2014,ling2015dlm}, has turned its attention to revisiting the structure of ADMM. In particular, the most influential work is that of He and Yuan  \cite{he20121}, which first analyzed the convergence rate of ADMM for general convex cases and investigated the sub-linear convergence result. Later, Shi et al. extended this trajectory by introducing the assumption of strongly convex \cite{Shi2014}, leading to a linear convergence rate of consensus ADMM (C-ADMM). The aforementioned advancements, however, have been exclusively  to deal with convex problems. The application of for non-convex consensus problems remains limited.
		
		
		
		In tackling the convergence challenge \cite{Houska2016, Hong2016} caused by non-convexity, several authors, including Hong and others
		\cite{hong2016convergence,li2015global,wang2018convergence,yang2022proximal}, have suggested enforcing global convergence of ADMM with sufficiently large parameter of the augmented term. As a special case, Wang  et al. proposed a variant of ADMM, known as Bregman ADMM \cite{wang2018convergence} (an extension of proximal point method).
		Later, Wang  et al. \cite{wang2019global} explored  some non-convex non-smooth special cases that can be effectively managed by ADMM. All the aforementioned methods are special instances of the \emph{proximal ADMM} algorithm proposed in   \cite{yang2022proximal}. We refer \cite{yang2022survey} as a comprehensive
		summary of ADMM variations to interested readers. We are aware of that most of the existing works of non-convex ADMM rely on the following two different methods: a) showing monotonic decreasing of a critical Lyapunov function \cite{wang2019global,yang2022proximal}, b) showing  a corresponding (augmented) Lagrangian function \cite{hong2016convergence} converging to a limit point. Though applicable to C-ADMM,  these two approaches inevitably entail quite complicated proofs. Specifically, the former method requires positive definiteness of the Lyapunov function, constructed hard in nature, while the latter requires the smoothness of the objective function, thereby narrowing the scope of applicability of C-ADMM.


		
		Fortunately, in pursing concise proofs, Houska et al. proposed the notion of T-ALADIN (typical augmented Lagrangian based alternating direction inexact Newton method) \cite{Houska2016}, a solution that provides theoretical local convergence guarantees when tackling general distributed non-convex optimization problems, assuming
		a good initial point is available. It was not until seven years later that Du and Wang realized that T-ALADIN might have a more efficient form of implementation  \cite{Du2023} (C-ALADIN, consensus ALADIN) for distributed consensus problem in the ALADIN framework. While the approaches showcased promising results, their guarantees were limited to local convergence.
		
		{\color{black}{		In this paper, depart from the conventional  \emph{Gauss-Seidel decomposition}, 
				instead, we delve into the  parallelizable implementation of C-ADMM and C-ALADIN, based on \emph{Jacobian decomposition}.
				Inspired by the globalization approach proposed in \cite[Section 6]{Houska2016},  we initiate the study of bi-level globalization, which ensures that
				C-ADMM and C-ALADIN globally converge to local optimizer. 
				In summary, this paper has the following key contributions:

				a) A novel \emph{bi-level\footnote{In fact, there are several papers also works on  bi-level  optimization structure, see \cite{sun2023two,Engelmann2020, stomberg2022decentralized,wang2022confederated,Houska2013}, however, that have different purpose compare with this paper.} proximal term}  is proposed to handle the non-convexity in the classical distributed consensus optimization optimization problem. Two distributed optimization algorithms, named CADMM-Prox and CALADIN-Prox are then proposed for combining bi-level globalization with C-ADMM and C-ALADIN, respectively. To our best knowledge, this is the first work that applies bi-level globalization to C-ADMM and C-ALADIN.  It's worth noting the proposed bi-level globalization approach is promising as it ensures the strong convexity of the lower level problem and is guaranteed to converge to the (local) optimal solution of the original non-convex problem. This contrasts with the globalization employed in \cite[Section 6]{Houska2016}, which is used to tuning the step size only and only local optimal solution is guaranteed.


				b) Surprisingly, we obtain a simple and straightforward proof procedure thanks to the use of globalization.

		}}

		The rest of this paper is organized as follows. 
		In Section \ref{sec: Merit and Prox}, we review the preliminaries regarding globalization strategy.  
		In Section \ref{sec: Conservative}, with the  $L_2$-norm proximal term and the $L_1$-norm merit function, we propose our globalization strategy for C-ALADIN and C-ADMM.
		In Section \ref{sec: Convergence}, we establish the global convergence theory.
		In the end, Section \ref{sec: conclusion} concludes this paper.

		\section{Preliminaries}\label{sec: Merit and Prox}
		
		A distributed consensus optimization problem\footnote{In the rest of this paper, we assume that, for all local minimizers of Problem \eqref{eq: DOPT_C}, the second order sufficient condition (SOSC) and linear independence constraint qualification (LICQ) are satisfied.} can be summarized as follows
		\begin{equation}\label{eq: DOPT_C}
			\begin{split}
				\min_{x,y}\;\;&\mathop{\sum}_{i=1}^{N}  f_i(x_i)\\
				\mathrm{s.t.}\;\;\;\;&\; x_i=y \;| \;\lambda_{i},\quad \forall i\in \{1,2, \dots,N\},
			\end{split}
		\end{equation}
		where $f_i$s are potentially non-smooth, non-convex, and bounded from below. {\color{black}{Here, $x_i\in \mathbb{R}^n$ denotes the local variable of agent $i$, $x\in \mathbb{R}^{N\cdot n}$ collects $x_i$s as a vector, $y\in \mathbb{R}^n$ represents the global variable, and $\lambda_i$s indicate the corresponding Lagrangian multiplier of agent $i$. 
		}}
		\subsection{An overview of C-ADMM and C-ALADIN}
		The augmented Lagrangian function of  
		Problem \eqref{eq: DOPT_C} can be formalized as follows
		\begin{equation}\label{eq: augmented Lagrangian}
			\begin{split}
				\mathscr{L}_\rho (x,y,\lambda) \overset{\text{def}}{=}&\; \mathop{\sum}_{i=1}^{N}  f_i(x_i) \\
				&+  \mathop{\sum}_{i=1}^{N} \lambda_{i}^\top \left( x_i-y \right)+\frac{\rho}{2}\mathop{\sum}_{i=1}^{N} \|x_i-y\|^2.
			\end{split}
		\end{equation}
		Based on  \eqref{eq: augmented Lagrangian}, C-ADMM  can be then represented as \eqref{eq: ADMM} 
		\begin{equation}\label{eq: ADMM}
			\left\{
			\begin{split}
				x_i^+ =&\; \mathop{\arg\min}_{x_i} f_i(x_i) +\lambda_{i}^\top \left( x_i-y \right) + \frac{\rho}{2} \|x_i-y\|^2;\\
				y^+ = &\; \frac{1}{N} \mathop{\sum}_{i=1}^{N}\left(x_i^+ + \frac{1}{\rho} \lambda_{i} \right); \\
				\lambda_{i}^+ =&\; \lambda_{i}+ \rho\left(x_i^+-y^+\right).\\
			\end{split}\right.
		\end{equation}
		If $f_i$s are convex, \eqref{eq: ADMM} is guaranteed to converge globally within finite iterations \cite{boyd2011distributed}.
		
		By combining C-ADMM with sequential quadratic programming (SQP) method\footnote{
			The main difference between \eqref{eq: ADMM} and \eqref{eq: ALADIN} is that C-ALADIN updates the dual $\lambda$ together with the global variable $y$, while C-ADMM updates the all the variables separately.}, we get C-ALADIN  \cite{Du2023}, as per \eqref{eq: ALADIN}. It only guarantees local convergence for non-convex problems when starting from a good initial guess.
		\begin{equation}\label{eq: ALADIN}
			\left\{
			\begin{split}
				&x_i^+ = \mathop{\arg\min}_{x_i} f_i(x_i) +\lambda_{i}^\top \left( x_i-y \right) + \frac{\rho}{2} \|x_i-y\|^2;\\
				&\left(y^+, \lambda^+ \right) \\
				=&\left\{
				\begin{split}
					\vspace{-1cm}	&\mathop{\mathrm{\arg}\mathrm{\min}}_{\Delta \bar{x}, y}\;\mathop{\sum}_{i=1}^{N}  \left(\frac{1}{2}\Delta\bar{ x}_i^\top B_i\Delta\bar{ x}_i+\partial f_i(x_i^+)^\top \Delta\bar{ x}_i \right)\\
					&\quad\mathrm{s.t.} \qquad\Delta\bar{ x}_i+x_i^+=y\; |\lambda_{i}\\
				\end{split}\right\}.\\
			\end{split}\right.
		\end{equation}
		Here $B_i\succ 0$ and $\partial f_i$ denote the local Hessian approximation and (sub)gradient of $f_i$, respectively. 
		
		\begin{remark}
			It is worth noting that, in solving large-scale optimization problems, C-ADMM is a \emph{first-order} algorithm that is computationally tractable. However, it lacks theoretical guarantees for non-convex problems. Conversely, C-ALADIN does provide rigorous convergence analysis, but it only has local convergence guarantees instead of global ones. Our proposed globalization strategy (detailed in \ref{sec: Convergence}) solved the two challenges.
			
		\end{remark}
		
		\subsection{The $L_2$-norm Proximal Term  $\&$ the $L_1$-norm Merit Function}
		Since $f_i$s are in general non-convex, we introduce the $L_2$-norm proximal term \cite{lemaire1989proximal,iusem1999augmented} as follows
		\begin{equation}\label{eq: gamma} 
			\frac{\gamma}{2} \|x_i  - z \|^2.
		\end{equation}
		By incorporating \eqref{eq: gamma} with an upper global variable $z$ and a sufficiently large parameter $\gamma>0$, we guarantee the strong convexity of $F_i^{{z}}$ with respect to $x_i$, formulated in Equation \eqref{eq: Proximal Term}
		\begin{equation}\label{eq: Proximal Term}
			F_i^{ z}(x_i)  \overset{\text{def}}{=}  f_i(x_i) + \frac{\gamma}{2} \|x_i  - z \|^2.
		\end{equation}
		
		{\color{black}{	Instead of solving \eqref{eq: DOPT_C} directly, in the following sections, two optimization algorithms are proposed for \eqref{eq: DOPT_C2} based on bi-level optimization. Specifically, 
				the update of $x$ and $y$ will be performed in the \emph{lower level} while the update of $z$  will be done in the \emph{upper level}.
				In the lower level, with a fixed $z$, we focus on searching the optimizer of \eqref{eq: DOPT_C2}.
				\begin{equation}\label{eq: DOPT_C2}
					\begin{split}
						\min_{x,y}\;\;&\mathop{\sum}_{i=1}^{N}  F^{ z}_i(x_i)\\
						\mathrm{s.t.}\;\;&\;\; x_i=y \;| \;\lambda_{i},\quad \forall i\in \{1,2, \dots,N\}.
					\end{split}
				\end{equation}
				Note that, $\gamma$ guarantees the strong convexity of the \emph{lower level problem} \eqref{eq: DOPT_C2}, making it easier to be solved. 
				Then, the update of $z$ -- a key ingredient within the \emph{upper level} -- enforces the solution of Problem~\eqref{eq: DOPT_C2} asymptotically reaching towards the (local) optimal solution of Problem \eqref{eq: DOPT_C} eventually.} }


		With the lower global variable $y$, the $L_1$-norm \emph{merit function} \cite[Chapter 18.3]{Nocedal2006} of the sub-problem can be then expressed as Equation \eqref{eq: merit}
		\begin{equation}\label{eq: merit}
			\Phi_i^{(z, y)}(x_i) \overset{\text{def}}{=} F^{z}_i(x_i)+ \sigma_i  \| x_i -y  \|_1,
		\end{equation}
		that aims to measure the performance of the objective and the equality constraints violation of each sub-problem. 	Here $\sigma_i$ is sufficiently large, such that, $\sigma_i \geq \|\lambda_i\|_\infty \; \forall i$ \cite{Nocedal2006}. 
		Obviously, with Equation \eqref{eq: merit}, the $L_1$-norm merit function of problem \eqref{eq: DOPT_C2} 
		can be described as \eqref{eq: Merit}
		\begin{equation}\label{eq: Merit}
			\Phi^{(z, y)}(x) \overset{\text{def}}{=} \; \sum_{i=1}^{N} F^{ z}_i(x_i)+ \sum_{i=1}^{N} \sigma_i \| x_i -y  \|_1.
		\end{equation}
		In the following sections, \emph{merit function} only relates to Equation \eqref{eq: Merit}.
		Note that, 
		\begin{equation}\label{eq: equal}
			\sum_{i=1}^{N} f_i(z)=\Phi^{(z, z)}(z)
		\end{equation}
		is an important property that helps with establishing the global convergence theory through this paper.
		The analysis in the following sections depends on the update of the triple $(x, y, z)$.
		Formally, 
		we define 
		\begin{equation}\label{eq: increment}
			\left\{
			\begin{split}
				\Delta x_i &\overset{\text{def}}{=} x_i^+ - y;\\
				\Delta\tilde{ x}_i&\overset{\text{def}}{=} y^+-x_i^+.
			\end{split}\right.
		\end{equation}
		Here $(\cdot)^+$ denotes the update of corresponding decision variables while $(\cdot)^-$  represents such variables from the latest iteration.

		\section{Proposed Optimization Frameworks}\label{sec: Conservative}
		In this section, we take advantage of our proposed globalization strategy and propose two novel algorithms to solve Equation~\eqref{eq: DOPT_C}, namely CADMM-Prox and CALADIN-Prox.
		Specifically,  in Subsection \eqref{eq: Conservative Operation}, we first propose the updating strategy of the local variable $x_i$ and global variable $y$. Then, we detail the entire optimization frameworks in
		Subsection \eqref{sec: ALADIN}.
		
		\subsection{Conservative Update of the Lower Level}\label{eq: Conservative Operation}
		
		The lower level of C-ALADIN and C-ADMM has two main steps: a) local nonlinear programming (NLP) minimization of $x_i$ (Section \ref{sec: local NLP}), b) consensus step ~(Section \ref{sec: global update}). 
		\subsubsection{Local Primal Variable Update}\label{sec: local NLP}
		
		With the proximal term \eqref{eq: gamma}, we suggest that updating the local primal variable $x_i$ through approximated reformulation. Concretely, 
		for C-ALADIN, 
		we linearize the upper objective $F^z_i(x_i)$, such that
		\begin{equation}\label{eq: local update}
			\begin{split}
				{x_i}^+=&\mathop{\arg\min}_{x_i} F^{ z}_i(x_i)+\lambda_i^\top  x_i+\frac{1}{2}\|x_i-y\|^2_{B_i}\\
				\approx& \mathop{\arg\min}_{x_i} \left(   \partial f_i(y) + \gamma (y-z)\right)^\top(x_i-y)  \\
				&   +\lambda_i^\top  x_i+\frac{1}{2}\|x_i-y\|^2_{B_i}\\
				=
				&	 y+ B_i^{-1}\left(\gamma (z-y) - \lambda_i -\partial f_i(y)  \right)
			\end{split}
		\end{equation}
		with $B_i\succ 0$. Here $\|\cdot\|_{B_i}$ denotes the Mahalanobis distance \cite{wang2018convergence} with symmetric positive definite matrix $B_i$.
		

		For C-ADMM,
		by setting $B_i = \rho I$, Equation \eqref{eq: local update} degrades into
		\eqref{eq: local update3}
		\begin{equation}\label{eq: local update3}
			\begin{split}
				{x_i}^+=	y+	\frac{1}{\rho}\left(\gamma (z-y) - \lambda_i -\partial f_i(y)  \right).
			\end{split}
		\end{equation}

		Indeed, alternative approaches to updating the local variables may also work. We detail this part in Appendix 
		\ref{app: cmt}.

		
		
		\subsubsection{Lower Level Global Variable Update (Consensus Step)}\label{sec: global update}
		
		In the C-ALADIN framework, if we have the local Hessian approximation matrices $B_i\approx \nabla^2F^z(x_i^+)\succ 0$, 
		the previous  lower global variable $y^-$, $\beta>0$, and the local (sub)gradient $g_i$ formalized as follows
		\begin{equation}\label{eq: BFGS}
			\begin{split}
				g_i&\overset{\text{def}}{=} \nabla F^{ z}(x_i^+)\\
				&=\partial f_i(x_i^+) +\gamma ( x_i^+- z)\\
				&\approx B_i(y-x_i^+)-\lambda_i,
			\end{split}
		\end{equation}
		we can then derive the update of the lower global variable $y^{\text{ALADIN}}$ in Equation \eqref{eq: consensus QP1}
		\begin{equation}\label{eq: consensus QP1}
			\begin{aligned}
				&\left(y^{\text{ALADIN}}\right)^+\\
				=&\left\{
				\begin{split}
					&\mathop{\mathrm{\arg}\mathrm{\min}}_{\Delta \bar{ x}, y}\quad\mathop{\sum}_{i=1}^{N}  \left(\frac{1}{2}\Delta\bar{ x}_i^\top B_i\Delta\bar{ x}_i+g_{i}^\top \Delta\bar{ x}_i \right)\\
					&\qquad\qquad+\frac{\beta}{2} \|y-y^-\|^2\\
					&\quad\mathrm{s.t.} \qquad\Delta\bar{ x}_i+x_i^+=y\; |\;\lambda_{i}\\
				\end{split}\right\}.
			\end{aligned}
		\end{equation}
	Note that, Equation \eqref{eq: consensus QP1} has a close form as follows
	\begin{equation}\label{eq: y_ALADIN}
		\begin{split}
			&	\left(y^{\text{ALADIN}}\right)^+\\	=& \left(\sum_{i=1}^{N} B_i+\beta I \right)^{-1}\left(\beta y^- +\sum_{i=1}^{N}\left( B_ix_i^+- g_i\right) \right).
		\end{split}
	\end{equation}
	
	By setting $B_i=\rho I$ and $\lambda_i= -g_i$, Equation \eqref{eq: y_ALADIN} boils down to \eqref{eq: ADMM global} that represents the lower global variable update of C-ADMM.
	\begin{equation}\label{eq: ADMM global}
		\begin{split}
			\left(y^{\text{ADMM}}\right)^+
			=\frac{1}{N\rho +\beta}\left( \beta y^-  + \sum_{i=1}^{N}\left( \rho x_i^+ + \lambda_i\right) \right).
		\end{split}
	\end{equation}
	
	\subsection{Proposed Algorithms}\label{sec: ALADIN}
	\subsubsection{CALADIN-Prox}
	With the approximate update of local variable $x_i$ (Equation \eqref{eq: local update}) and global variable $y$ (Equation \eqref{eq: consensus QP1}), the linearized form of C-ALADIN, equipped with globalization strategy, can be then expressed as Algorithm
	\ref{alg: Globalization of ALADIN}.

	\begin{algorithm}[t]
		\small
		\caption{CALADIN-Prox}
		\textbf{Initialization:} Initial guess $\lambda_i$, choose upper global variable $z$, set lower global variable $y = z$, local Hessian approximation $B_i\succ 0$. Set $\gamma>0,  \rho>0, \beta\geq0, \sigma_i = \|\lambda_i\|_\infty$. \\
		\textbf{Repeat:}
		\begin{enumerate}
			\item  Inexactly updates the local variable $x_i$
			with 
			Equation \eqref{eq: local update} or \eqref{eq: multi1} 
			where $\lambda_{i}=B_i(x_i^--y)-g_i$.
				
				\item Evaluate the new sub-gradient 
				\begin{equation*}\label{eq: subgradient}
					\begin{split}
						g_i=B_i(y-x_i^+)-\lambda_i.
					\end{split}
				\end{equation*} from each ${x_i}^+$ and upload to the master.

				\item  Update the lower global variable $y$:
				\begin{equation*}\label{eq: BFGS QP}
					\begin{split}
						y^+=& \left(\sum_{i=1}^{N} B_i+\beta I \right)^{-1}\left(\beta y +\sum_{i=1}^{N}\left( B_ix_i^+- g_i\right) \right).
					\end{split}
				\end{equation*}
				
				\item Set $\sigma_i^+ = \|B_i(x_i^+-y^+)-g_i\|_\infty$ if 
				\[\sigma_i<\|B_i(x_i^+-y^+)-g_i\|_\infty. \]  
				
				\item	Update the upper global variable $z^+ = y^+$  if \[\Phi^{(z, y^+)}(y^+) <\Phi^{(z, y)}(y).\]

				\item Download the updated variables:
				\begin{equation*}
					\left\{
					\begin{array}{l}
						\begin{split}
							y&\leftarrow y^+;\\
							0 &\;\text{or}\; 1\qquad \text{1 for updated $z$ and 0 for no reaction \footnotemark{}}.
						\end{split}
					\end{array}
					\right.
				\end{equation*} 
			\end{enumerate}
			\label{alg: Globalization of ALADIN}
		\end{algorithm}
		\footnotetext{Optionally update the  local Hessian approximation $B_i\approx \nabla^2 F_i^{z^+}(z^+)$ of each sub-agent.}

		\begin{remark}
			The numerical value of $\left(\sum_{i=1}^{N} B_i +\beta I\right)^{-1}$ can be prepared for acceleration. As discussed in \cite{Du2023}, when setting $B_i = \rho I$, C-ALADIN is still different from C-ADMM in updating the dual variables.  
		\end{remark}

		\subsubsection{CADMM-Prox}
		Similar to Algorithm \ref{alg: Globalization of ALADIN},  C-ADMM with globalization strategy can be summarized as Algorithm \ref{alg: Globalization of ADMM}, supported by Equation \eqref{eq: local update3}  and \eqref{eq: ADMM global}.
		
		\begin{remark}
			A proximal term has also been introduced in \emph{Proximal ADMM} \cite{yang2022proximal} as $\frac{\gamma}{2}\|x_i  - y\|^2$, however, only a single-level optimization structure is considered in that work.  
			The advantage of the bi-level structure
			in Algorithm \ref{alg: Globalization of ADMM} is that the upper level global variable $z$ in $\frac{\gamma}{2} \|x_i  - z \|^2$ is not updated per iteration in the lower level, which gives the strong convexity of \eqref{eq: DOPT_C2}. In fact, for single-level ADMM, the strong convexity of the augmented sub-problem in Equation \eqref{eq: ADMM} is not a sufficient condition for ADMM to converge. See a contrary example \cite[Example 2.1]{Houska2016}. 
		\end{remark}

		\section{Convergence Analysis}\label{sec: Convergence}
		In this section, we establish the global convergence theory of Algorithm \ref{alg: Globalization of ALADIN} and \ref{alg: Globalization of ADMM}, which is the major contribution of this paper.
		In Subsection \eqref{sec: QP}, we begin with analyzing the convergence behavior of the lower level of Algorithms \ref{alg: Globalization of ALADIN} and \ref{alg: Globalization of ADMM}. Later, in Subsection \eqref{sec: Global Convergence Theorem}, we establish the global convergence theory with our proposed globalization strategy in the sense of upper level. The combination of the two finally yields the global convergence.
		\begin{algorithm}[t]
			\small
			\caption{CADMM-Prox}
			\textbf{Initialization:}Initial the upper and lower global parameter $z, y$ and the dual $\lambda_i$. Set $\gamma>0, \rho>0, \beta\geq0, \sigma_i = \|\lambda_i\|_\infty$.  \\
			\textbf{Repeat:}
			\begin{enumerate}
				\item 
				Evaluate $\lambda_i^+=\lambda_i+\rho(x_i-y)$, then
				inexactly update the local variable as $x_i^+$ with Equation \eqref{eq: local update3} or \eqref{eq: multi2} on the agent side and upload to the master.

				\item  Update the lower global variable $y$:
				\begin{equation*}
					\begin{split}
						y^+ &= \frac{1}{N\rho +\beta}\left( \beta y  + \sum_{i=1}^{N}\left( \rho x_i^+ + \lambda_i\right) \right).
					\end{split}
				\end{equation*}
				
				\item Set $\sigma_i^+ = \|\lambda_i+\rho(x_i^+-y^+)\|_\infty$ if 
				\[\sigma_i<\|\lambda_i+\rho(x_i^+-y^+)\|_\infty. \]  
				
				\item Update the upper global variable $z = y^+$ if 
				\[\Phi^{(z, y^+)}(y^+) < \Phi^{(z, y)}(y).\]

				\item Broadcast the updated variables:
				\begin{equation*}
					\left\{
					\begin{array}{l}
						\begin{split}
							y&\leftarrow y^+\\
							0 &\;\text{or}\; 1\qquad \text{1 for updated $z$ and 0 for no reaction.}
						\end{split}
					\end{array}
					\right.
				\end{equation*} 
			\end{enumerate}
			\label{alg: Globalization of ADMM}
		\end{algorithm}




		\subsection{Lower Level Merit Function Decreasing Guarantees}\label{sec: QP}
		Given a nonlinear mapping $\Lambda(\xi): \mathbb{R}^n\rightarrow \mathbb{R}$, 
		and the increment of the decision variable $p\in \mathbb{R}^n$,  
		we have the concept of \emph{directional derivative}, defined as \eqref{eq: directional derivative1}
		\begin{equation}\label{eq: directional derivative1}
			\text{D} \Lambda(\xi)[p] \overset{\text{def}}{=} \lim_{t\rightarrow 0^+} \frac{\Lambda(\xi+tp)-\Lambda(\xi)}{t}. 
		\end{equation}
		Before providing the decent condition of the merit function \eqref{eq: Merit},
		we first give the following lemma.
		
		\begin{lemma}\label{Lemma: directional derivative}
			With the definition of $\text{D} \Lambda(\xi)[p]$ in Equation \eqref{eq: directional derivative1}, assume that $\Delta\tilde x$ is generated by the QP iteration from \eqref{eq: consensus QP1},	then the directional derivative of Equation \eqref{eq: Merit} is represented as
			\begin{equation}\label{eq: directional derivative0}
				\begin{split}
					&\text{D} \Phi^{( z,y^+ )}(x^+)[\Delta\tilde x]\\
					=& \sum_{i=1}^{N}   \nabla	F_i^{ z}(x_i^+)^\top \Delta\tilde x_i -	\sum_{i=1}^{N}\sigma_i \| \Delta\tilde x_i  \|_1	+O(t).
				\end{split}
			\end{equation} 
			Here, $y^+$ can be pre-computed by following Equation
			\eqref{eq: y_ALADIN} in Algorithm \ref{alg: Globalization of ALADIN} or \eqref{eq: ADMM global} in Algorithm \ref{alg: Globalization of ADMM}.
			
		\end{lemma}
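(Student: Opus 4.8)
The plan is to compute the directional derivative of the merit function $\Phi^{(z,y^+)}(x^+)$ along the direction $\Delta\tilde x$ by splitting it into its two natural pieces: the smooth part $\sum_i F_i^z(x_i)$ and the non-smooth $L_1$-penalty part $\sum_i \sigma_i\|x_i - y\|_1$. Because $F_i^z$ is differentiable (indeed $C^1$ away from any non-smoothness of $f_i$, and strongly convex by construction of the proximal term), the directional derivative of the smooth part is simply the inner product $\sum_i \nabla F_i^z(x_i^+)^\top \Delta\tilde x_i$ by the chain rule, which already yields the first term on the right-hand side of \eqref{eq: directional derivative0}. The work is therefore concentrated entirely in the non-smooth penalty term.

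For the penalty term, I would recall the definition \eqref{eq: increment} that $\Delta\tilde x_i = y^+ - x_i^+$, so that the consensus residual being differentiated is $x_i - y$ evaluated at the perturbed point $x^+ + t\Delta\tilde x$ against the matching shift of $y$. The key observation is that the QP consensus step \eqref{eq: consensus QP1} enforces $\Delta\bar x_i + x_i^+ = y$ at the solution, meaning the step $\Delta\tilde x_i$ drives the local variable toward the common value $y^+$; consequently, moving from $x_i^+$ by $t\Delta\tilde x_i$ while simultaneously moving the consensus target produces a residual that shrinks linearly in $t$. I would then apply the standard directional-derivative formula for $\|\cdot\|_1$, namely $\text{D}\,\|v\|_1[d] = \nabla\|v\|_1^\top d$ on coordinates where $v\neq 0$ and $= \|d\|_1$ on the active (zero) coordinates, and argue that along the feasible QP direction the dominant contribution collapses to $-\sum_i \sigma_i\|\Delta\tilde x_i\|_1$, with the mismatch between the exact $L_1$ subdifferential behavior and this clean expression absorbed into the $O(t)$ remainder.

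The main obstacle I anticipate is the careful bookkeeping of the $L_1$ directional derivative on the coordinates where the residual $x_i^+ - y^+$ is exactly zero versus nonzero, since the $L_1$-norm is only directionally (not Fréchet) differentiable there; this is precisely where the non-smoothness of the merit function lives and where the $+O(t)$ term must be justified rather than hand-waved. I would handle this by writing the difference quotient $\tfrac{1}{t}\big(\Phi^{(z,y^+)}(x^+ + t\Delta\tilde x) - \Phi^{(z,y^+)}(x^+)\big)$ explicitly, expanding each $\|x_i^+ + t\Delta\tilde x_i - y^+\|_1$, and taking $t\to 0^+$; the strong convexity supplied by $\gamma$ and the positive-definiteness $B_i\succ 0$ guarantee the QP has a unique minimizer so that $\Delta\tilde x$ is well defined and the expansion is legitimate. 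Collecting the smooth gradient term and the $L_1$ term, and bundling the higher-order residual into $O(t)$, then gives exactly \eqref{eq: directional derivative0}, completing the proof.
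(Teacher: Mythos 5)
Your proposal is correct and takes essentially the same route as the paper's proof in Appendix~\ref{app: Proof of Lemma1} (itself modeled on \cite[Theorem 18.2]{Nocedal2006}): expand the difference quotient of $\Phi^{(z,y^+)}$ at $x^+$, Taylor-expand the smooth part $F_i^z$ to produce the gradient term, and exploit the QP consensus constraint in \eqref{eq: consensus QP1}, which gives $\Delta\tilde x_i = y^+ - x_i^+$ per \eqref{eq: increment}, so that $\|x_i^+ + t\Delta\tilde x_i - y^+\|_1 = (1-t)\|x_i^+ - y^+\|_1$ and the penalty contributes exactly $-\sum_{i=1}^N \sigma_i\|\Delta\tilde x_i\|_1$. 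One clarification: since the direction is exactly the negative residual, any zero coordinate of $x_i^+ - y^+$ has a zero direction component, making the $L_1$ term \emph{exactly} linear in $t$ on $[0,1]$ with no subdifferential case analysis needed, so the $O(t)$ in \eqref{eq: directional derivative0} stems from the Taylor remainder of the smooth part, not from an ``absorbed $L_1$ mismatch'' as you suggest.
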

		
		\begin{proof}
			See Appendix \ref{app: Proof of Lemma1}. 	\hfill$\blacksquare$
		\end{proof}
		Moreover, for the $L_1$ norm gap $\|x-y\|_1$, the directional derivative with respect to $x$ at point $y$, in direction $p$, can be denoted as follows 
		\begin{equation}\label{eq: directional derivative2}
			\text{D} \|x-y\|_1(y)[p] = \|p\|_1.
		\end{equation}

		With Equation \eqref{eq: directional derivative0} and \eqref{eq: directional derivative2}, now we can show that the  merit function \eqref{eq: Merit} is decreasing in the lower level of Algorithm  \ref{alg: Globalization of ALADIN} and \ref{alg: Globalization of ADMM} , detailed in Theorem \ref{theorem: decrease1} and \ref{theorem: decrease}. Note that, the following analysis mainly relates to the increment with respect to the local variables $x_i$s introduced in Equation \eqref{eq: increment}.
		\subsubsection{Merit Function Analysis at the Local NLP Update Step }
		The following theorem guarantees the decrement of the merit function with the local primal $x_i$ update for CALADIN-Prox. 
		\begin{theorem}\label{theorem: decrease1} With the local update \eqref{eq: local update},
			the directional derivative of the lower merit function \eqref{eq: Merit}  with respect to the local NLP \eqref{eq: local update}  has the following property:
			\begin{equation}\label{eq: purpose}
				\text{D} \Phi^{( z, y)}(y)[\Delta x]< 0
			\end{equation}
			will hold if 
			\begin{equation}\label{eq: assumption}
				\begin{split}
					\sum_{i=1}^{N} \lambda_i^\top \Delta x_i +\sum_{i=1}^{N} \|\Delta x_i\|^2_{B_i}>\sum_{i=1}^{N}\sigma_i  \| \Delta x_i  \|_1.
				\end{split}
			\end{equation}
		\end{theorem}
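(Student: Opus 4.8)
The plan is to evaluate the directional derivative $\text{D}\,\Phi^{(z,y)}(y)[\Delta x]$ directly from its definition, where the base point is $x=y$ (so every $x_i=y$ and each consensus gap $x_i-y$ vanishes) and the direction is $\Delta x_i = x_i^+ - y$ from the local update \eqref{eq: local update}. First I would split the merit function \eqref{eq: Merit} into its smooth part $\sum_i F_i^z(x_i)$ and its nonsmooth part $\sum_i \sigma_i \|x_i - y\|_1$, and differentiate each piece separately along $\Delta x$. For the smooth part the directional derivative is simply $\sum_i \nabla F_i^z(y)^\top \Delta x_i$. For the nonsmooth $L_1$ part, since we are sitting exactly at the kink $x_i = y$, I would invoke Equation \eqref{eq: directional derivative2} to get $\sum_i \sigma_i \|\Delta x_i\|_1$. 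Combining gives
\begin{equation*}
	\text{D}\,\Phi^{(z,y)}(y)[\Delta x] = \sum_{i=1}^{N} \nabla F_i^z(y)^\top \Delta x_i + \sum_{i=1}^{N} \sigma_i \|\Delta x_i\|_1,
\end{equation*}
which is the raw expression I then need to bound above by zero. Note the sign on the $L_1$ term is positive here (unlike in Lemma \ref{Lemma: directional derivative}), precisely because the increment points \emph{away} from the kink at the base point $y$.

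Next I would rewrite the gradient term using the structure of the local update. From \eqref{eq: local update}, the minimizer $x_i^+$ satisfies the stationarity condition of the linearized subproblem, namely $\nabla F_i^z(y) + \lambda_i + B_i(x_i^+ - y) = 0$, equivalently $\nabla F_i^z(y) = -\lambda_i - B_i \Delta x_i$ using $\Delta x_i = x_i^+ - y$. Substituting this identity into the gradient term converts $\sum_i \nabla F_i^z(y)^\top \Delta x_i$ into $-\sum_i \lambda_i^\top \Delta x_i - \sum_i \Delta x_i^\top B_i \Delta x_i = -\sum_i \lambda_i^\top \Delta x_i - \sum_i \|\Delta x_i\|_{B_i}^2$. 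The directional derivative therefore becomes
\begin{equation*}
	\text{D}\,\Phi^{(z,y)}(y)[\Delta x] = -\sum_{i=1}^{N} \lambda_i^\top \Delta x_i - \sum_{i=1}^{N} \|\Delta x_i\|_{B_i}^2 + \sum_{i=1}^{N} \sigma_i \|\Delta x_i\|_1.
\end{equation*}
The strict negativity \eqref{eq: purpose} is then immediate: the right-hand side is negative exactly when $\sum_i \lambda_i^\top \Delta x_i + \sum_i \|\Delta x_i\|_{B_i}^2 > \sum_i \sigma_i \|\Delta x_i\|_1$, which is precisely the hypothesis \eqref{eq: assumption}. So the theorem reduces to this one algebraic rearrangement once the two derivative pieces are correctly assembled.

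The main obstacle I anticipate is the careful handling of the nonsmooth $L_1$ term at the base point. Because $y$ is exactly the point where $\|x_i - y\|_1$ is nondifferentiable, one must use the one-sided directional derivative \eqref{eq: directional derivative1} rather than a gradient, and be sure the $O(t)$ remainder from the smooth part genuinely vanishes in the limit $t\to 0^+$ so that it does not corrupt the leading-order term. I would also want to double-check the sign convention and the exact form of the stationarity condition coming out of the approximate/linearized update \eqref{eq: local update} — in particular whether the Hessian approximation $B_i$ enters as $B_i(x_i^+ - y)$ with the correct sign, since a sign error there flips the $\|\Delta x_i\|_{B_i}^2$ term and breaks the match with \eqref{eq: assumption}. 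Once those two points are pinned down, the argument is a short and direct computation with no deeper difficulty.
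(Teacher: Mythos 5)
Your proposal is correct and takes essentially the same route as the paper's own proof: you derive the stationarity condition $\nabla F_i^{z}(y)+\lambda_i+B_i\Delta x_i=0$ from the linearized local subproblem \eqref{eq: local update}, multiply by $\Delta x_i$ and sum to obtain exactly the paper's identity \eqref{eq: NLP inequality}, then combine it with \eqref{eq: directional derivative2} for the $L_1$ kink to reduce \eqref{eq: purpose} to the hypothesis \eqref{eq: assumption}. Your explicit note that the $L_1$ term enters with a positive sign at the base point $y$ (unlike in Lemma \ref{Lemma: directional derivative}) is a detail the paper leaves implicit, but the argument is the same.
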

		\begin{proof}
			See Appendix \ref{app: theorem1}.	\hfill$\blacksquare$
		\end{proof}
		Since $x_i^+= y + \Delta x_i$,
		Theorem \ref{theorem: decrease1} implies
		\begin{equation}
			\Phi^{( z, y)}(x_i^+) <\Phi^{( z, y)}(y).
		\end{equation}
		By setting $B_i = \rho I$ and updating with Equation \eqref{eq: local update3}, the above statement also works for CADMM-Prox.
		
		Note that, the decreasing analysis of the merit function on local NLP update, described by Equation \eqref{eq: local update0}, \eqref{eq: multi1},  \eqref{eq: local update2} and \eqref{eq: multi2}, are similar to Theorem \ref{theorem: decrease1}. We thus do not show detail here.
		
		
		\subsubsection{Merit Function Analysis at the Lower Level Global Variable Update Step}
		The following theorem  provides the descent condition of the merit function \eqref{eq: Merit}. Such a condition can evaluate the results of the consensus step in Algorithm \ref{alg: Globalization of ALADIN}.
		

		\begin{theorem}\label{theorem: decrease}
			With Lemma \ref{Lemma: directional derivative},
			the directional derivative  of the lower merit function \eqref{eq: Merit}
			with respect to the consensus QP \eqref{eq: consensus QP1} has the following property:
			\begin{equation}\label{eq: consensus descent}
				\begin{split}
					&\text{D} \Phi^{( z, y^+)}(x^+)[{\Delta\tilde x}]< 0,\\
				\end{split}
			\end{equation}
			if \begin{equation}\label{eq: condition}
				\sigma_i>\|\lambda_i^+\|_\infty.
			\end{equation}
		\end{theorem}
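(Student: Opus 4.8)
The plan is to start from the directional-derivative formula supplied by Lemma \ref{Lemma: directional derivative} and to eliminate the gradient terms by invoking the first-order optimality (KKT) conditions of the consensus QP \eqref{eq: consensus QP1}. Writing $g_i = \nabla F_i^{z}(x_i^+)$, Lemma \ref{Lemma: directional derivative} gives, up to the $O(t)$ term that vanishes as $t\to 0^+$,
\begin{equation*}
\text{D}\Phi^{(z,y^+)}(x^+)[\Delta\tilde x] = \sum_{i=1}^{N} g_i^\top \Delta\tilde x_i - \sum_{i=1}^{N} \sigma_i \|\Delta\tilde x_i\|_1.
\end{equation*}
The key algebraic identity comes from stationarity of \eqref{eq: consensus QP1} with respect to $\Delta\bar x_i$: since $\Delta\tilde x_i = y^+ - x_i^+$ is the optimizer and $\lambda_i^+$ is its associated multiplier, we have $B_i\Delta\tilde x_i + g_i + \lambda_i^+ = 0$, i.e.\ $g_i = -B_i\Delta\tilde x_i - \lambda_i^+$. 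This is precisely the quantity recorded in step~4 of Algorithm \ref{alg: Globalization of ALADIN}, which guarantees that the multiplier in the KKT identity coincides with the $\lambda_i^+$ in the hypothesis.

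Substituting this identity, each gradient inner product becomes $g_i^\top \Delta\tilde x_i = -\|\Delta\tilde x_i\|^2_{B_i} - (\lambda_i^+)^\top \Delta\tilde x_i$, so the directional derivative collapses to
\begin{equation*}
\text{D}\Phi^{(z,y^+)}(x^+)[\Delta\tilde x] = -\sum_{i=1}^{N} \|\Delta\tilde x_i\|^2_{B_i} - \sum_{i=1}^{N} (\lambda_i^+)^\top \Delta\tilde x_i - \sum_{i=1}^{N} \sigma_i \|\Delta\tilde x_i\|_1.
\end{equation*}
Next I would apply H\"older's inequality, $-(\lambda_i^+)^\top \Delta\tilde x_i \le \|\lambda_i^+\|_\infty \|\Delta\tilde x_i\|_1$, to reach the bound
\begin{equation*}
\text{D}\Phi^{(z,y^+)}(x^+)[\Delta\tilde x] \le \sum_{i=1}^{N} \left(\|\lambda_i^+\|_\infty - \sigma_i\right)\|\Delta\tilde x_i\|_1 - \sum_{i=1}^{N} \|\Delta\tilde x_i\|^2_{B_i}.
\end{equation*}

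The conclusion then follows from a sign count: the hypothesis $\sigma_i > \|\lambda_i^+\|_\infty$ makes every coefficient $\|\lambda_i^+\|_\infty - \sigma_i$ strictly negative, while $B_i\succ 0$ keeps each $\|\Delta\tilde x_i\|^2_{B_i}$ nonnegative, so the entire right-hand side is $\le 0$ and is strictly negative as soon as some $\Delta\tilde x_i \neq 0$. The C-ADMM version in Algorithm \ref{alg: Globalization of ADMM} is recovered verbatim by setting $B_i = \rho I$. I expect the only delicate point to be the degenerate case $\Delta\tilde x = 0$: there $y^+ = x_i^+$ for every $i$, so the consensus step produced no motion and the iterate is already a fixed point of the lower level, where strict descent is neither expected nor needed; I would therefore read the theorem's conclusion as applying to genuine (nonzero) steps and treat $\Delta\tilde x = 0$ as the termination criterion. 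The remaining bookkeeping -- confirming the H\"older constant and that $O(t)\to 0$ in the defining limit \eqref{eq: directional derivative1} -- is routine.
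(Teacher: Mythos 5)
Your proof is correct and follows essentially the same route as the paper's Appendix D: both derive the stationarity identity $B_i\Delta\tilde x_i + \lambda_i^+ + \nabla F_i^{z}(x_i^+) = 0$ from the consensus QP \eqref{eq: consensus QP1}, take the inner product with $\Delta\tilde x_i$, bound $-(\lambda_i^+)^\top \Delta\tilde x_i \le \|\lambda_i^+\|_\infty \|\Delta\tilde x_i\|_1$ by H\"older, and substitute into the formula of Lemma \ref{Lemma: directional derivative}. Your explicit treatment of the degenerate case $\Delta\tilde x = 0$ (where the derivative is $0$ rather than strictly negative, so the strict inequality should be read as applying to nonzero steps) is a small refinement the paper leaves implicit.
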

		\begin{proof}
			See Appendix \ref{app: Theorem2}.	\hfill$\blacksquare$
		\end{proof}
		After updating the lower global variable $y$, the linear consensus coupling of Equation \eqref{eq: consensus QP1} holds, such that $x_i^++\Delta\tilde{  x_i} = y^+$.
		Therefore,  
		\begin{equation}\label{eq: global descent}
			\Phi^{( z, y^+)}(y^+) < \Phi^{( z, y)}(x^+)
		\end{equation}
		is guaranteed with the Theorem \ref{theorem: decrease}. 
		The above theorem shows that Equation \eqref{eq: consensus QP1} will give a decent direction of the lower merit function.
		Similar to the above discussion, 
		by replacing $B_i$ with $\rho I$ and applying Appendix \ref{app: Theorem2} again,  \eqref{eq: global descent} also works for CADMM-Prox.   

		By combining Theorem \ref{theorem: decrease1} and \ref{theorem: decrease}, the merit function is guaranteed to decrease with respect to the lower level of Algorithm \ref{alg: Globalization of ALADIN} and \ref{alg: Globalization of ADMM} in only one step, such that
		\begin{equation}\label{eq: lower}
			\Phi^{( z,y^+)}(y^+)<  \Phi^{( z,  z)}( z).
		\end{equation}
		
		\begin{remark}\label{remark: lower}
			If one of \eqref{eq: assumption} and \eqref{eq: condition} fail to meet, after one step of the lower level,  the update of the upper global variable $z$ of Algorithm \ref{alg: Globalization of ALADIN} and \ref{alg: Globalization of ADMM} can not be guaranteed. Fortunately, with the global convergence theory of convex C-ADMM \cite{Shi2014}, Equation \eqref{eq: lower} still holds within finite steps for the lower level of Algorithm \ref{alg: Globalization of ADMM}. \end{remark}

		For convex C-ALADIN consisting of a special case such that $B_i=\rho I$, \cite{Du2023} shows the linear convergence of strongly convex problems. The following statement will show that C-ALADIN can also globally converge within finite iteration for convex problems in a more general setting. The strongly convex distributed scenario relates to the lower level of Algorithm \ref{alg: Globalization of ALADIN} such that \eqref{eq: lower} also holds.

		
		\begin{theorem}\label{the: lya}
			Assume $(y^*, \lambda^*)$ denotes the KKT (Karush-Kuhn-Tucker) point   of Problem \eqref{eq: DOPT_C2},
			for general local Hessian approximation $B_i\succ 0$,
			the Lyapunov function \eqref{eq: lya}
			\begin{equation}\label{eq: lya}
				\mathcal L(y,\lambda) =  \sum_{i=1}^{N}      \|y-y^*\|_{B_i}^2+  \sum_{i=1}^{N}\|\lambda_{i} - \lambda_{i}^* \|_{B_i^{-1}}^2
			\end{equation} 
			is monotonely decreasing with the lower level of   Algorithm \ref{alg: Globalization of ALADIN}, such that
			\begin{equation*}\label{eq: decrease}
				\mathcal L(y^+,\lambda^+)-\mathcal L(y,\lambda) \leq 0.
			\end{equation*} 
		\end{theorem}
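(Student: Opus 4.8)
The plan is to run a primal--dual Lyapunov argument, exploiting that the parameter $\gamma$ makes the lower-level problem \eqref{eq: DOPT_C2} strongly convex. First I would record the first-order optimality conditions of the two subproblems that make up one lower-level sweep of Algorithm \ref{alg: Globalization of ALADIN}. Local NLP stationarity gives $g_i = \nabla F_i^{z}(x_i^+) = B_i(y - x_i^+) - \lambda_i$, which is exactly the $g_i$ computed in Step 2; the KKT system of the consensus QP \eqref{eq: consensus QP1} gives the dual identity $\lambda_i^+ = B_i(x_i^+ - y^+) - g_i$ together with the $y$-stationarity (dual feasibility) $\sum_{i=1}^{N}\lambda_i^+ = \beta(y^+ - y)$. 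Subtracting the two primal expressions yields the key relation $B_i^{-1}(\lambda_i^+ - \lambda_i) = 2x_i^+ - y - y^+$, which turns dual increments into primal ones and is what makes the $B_i^{-1}$-weighted term of \eqref{eq: lya} tractable.

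Next I would characterize the KKT point of \eqref{eq: DOPT_C2} as $x_i^* = y^*$, $\nabla F_i^{z}(y^*) + \lambda_i^* = 0$, and $\sum_{i} \lambda_i^* = 0$. The only analytic input is monotonicity of $\nabla F_i^{z}$: strong convexity yields $(g_i + \lambda_i^*)^\top (x_i^+ - y^*) = \bigl(\nabla F_i^{z}(x_i^+) - \nabla F_i^{z}(y^*)\bigr)^\top (x_i^+ - y^*) \ge 0$, with a modulus term $\ge \gamma\|x_i^+ - y^*\|^2$ that supplies a strict decrease whenever the iterate is away from $(y^*,\lambda^*)$.

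I would then expand the Lyapunov difference through the identity $\|u\|_M^2 - \|v\|_M^2 = 2u^\top M(u - v) - \|u - v\|_M^2$, applied with $M = B_i$ to the $y$-terms and $M = B_i^{-1}$ to the $\lambda_i$-terms. This produces two manifestly nonpositive square terms, $-\sum_i\|y^+ - y\|_{B_i}^2$ and $-\sum_i\|\lambda_i^+ - \lambda_i\|_{B_i^{-1}}^2$, plus two cross terms. Into the cross terms I substitute the identities from the first step; writing $2x_i^+ - y - y^+ = 2(x_i^+ - y^*) - (y - y^*) - (y^+ - y^*)$, the components that couple $\lambda_i^+ - \lambda_i^*$ with the common directions $y - y^*$ and $y^+ - y^*$ collapse under dual feasibility, because $\sum_i(\lambda_i^+ - \lambda_i^*) = \beta(y^+ - y)$ (and vanishes when $\beta = 0$). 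What remains is dominated by the monotonicity inequality, so that together with the two negative square terms the whole difference is nonpositive, giving $\mathcal L(y^+,\lambda^+) - \mathcal L(y,\lambda) \le 0$. Setting $B_i = \rho I$ then specializes the argument to the C-ADMM case.

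I expect the main obstacle to be the bookkeeping in this last step. The C-ALADIN dual update differs from the textbook ADMM update $\lambda_i^+ = \lambda_i + \rho(x_i^+ - y^+)$, so the cancellation of cross terms must be verified against the correct identity $B_i^{-1}(\lambda_i^+ - \lambda_i) = 2x_i^+ - y - y^+$ rather than the ADMM one; moreover the proximal parameter $\beta > 0$ injects the extra coupling $\sum_i\lambda_i^+ = \beta(y^+ - y)$, which has to be absorbed into the two negative square terms rather than be allowed to spoil them. Confirming that these residual $\beta$-terms are nonpositive, and that all $y$-direction couplings indeed telescope under dual feasibility, is the delicate part of the calculation.
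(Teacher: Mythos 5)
Your proposal follows essentially the same route as the paper's proof in Appendix \ref{app: lya}: the same key identity $x_i^+ = \tfrac{B_i^{-1}}{2}(\lambda_i^+ - \lambda_i) + \tfrac{1}{2}(y + y^+)$, the same cancellation of cross terms via the dual-feasibility conditions $\sum_i \lambda_i = 0$ and $\sum_i \lambda_i^* = 0$, and the same monotonicity input (via the local-NLP stationarity $\lambda_i + B_i(x_i^+ - y) = -\nabla F_i^{z}(x_i^+)$ and $\lambda_i^* = -\nabla F_i^{z}(y^*)$, which makes the decisive inner product equal $-4\sum_i (\nabla F_i^{z}(x_i^+) - \nabla F_i^{z}(y^*))^\top (x_i^+ - y^*) \leq 0$); the paper simply runs the computation in the opposite direction, starting from the expression $4\sum_i (\lambda_i - \lambda_i^* + B_i(x_i^+ - y))^\top (x_i^+ - y^*)$ and showing it equals $\mathcal{L}(y^+, \lambda^+) - \mathcal{L}(y, \lambda)$, rather than expanding the Lyapunov difference into negative squares plus cross terms as you do. One point where you are actually more careful than the paper: you track the $\beta > 0$ coupling $\sum_i \lambda_i^+ = \beta(y^+ - y)$ explicitly, whereas the paper invokes $\sum_i \lambda_i = 0$ along the iterates, which is valid only when $\beta = 0$ (for $\beta > 0$ the leftover terms such as $-2\beta(y^+ - y^*)^\top(y^+ - y)$ carry no definite sign), so the ``delicate part'' you flag at the end is a genuine issue that the paper's own argument silently assumes away rather than resolves.
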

		\begin{proof}
			See Appendix \ref{app: lya}.	\hfill$\blacksquare$
		\end{proof}
		Since Equation \eqref{eq: lya} is positive definite, it converges to $0$ in finite iterations by adopting C-ALADIN.
		For strongly convex problems,  we can further show the linear rate of  C-ALADIN by applying \cite[Theorem 7]{Du2023} again.

		\subsection{Enforcing Global Convergence to Local Optimizer}\label{sec: Global Convergence Theorem}
		
			%
			%
		
		With the discussion of Equation \eqref{eq: lower} in Subsection \eqref{sec: QP},
		now the global convergence theory of Algorithm \ref{alg: Globalization of ALADIN} and \ref{alg: Globalization of ADMM} can be established with the following theorem.
		
		\begin{theorem}\label{theorem: global convergence}
			With the lower level of Algorithm \ref{alg: Globalization of ALADIN} and \ref{alg: Globalization of ADMM}, the decrement of $\Phi^{(\cdot,\cdot)}(\cdot)$ always holds, such that 
			\begin{equation}\label{eq: global decent}
				\Phi^{( z,  z)}( z) -\Phi^{(y^+,y^+)}(y^+) >   \frac{\gamma N}{2} \|  y^+- z \|^2	>0.
			\end{equation}
			Here, $y^+$ is a strict global minimizer of Problem \eqref{eq: DOPT_C2} generated by the lower level of  Algorithm \ref{alg: Globalization of ALADIN}  or \ref{alg: Globalization of ADMM} with ${z}$ ($y^+\neq {z}$) as an initial point.
		\end{theorem}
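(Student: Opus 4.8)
The plan is to reduce the statement to the structural identity \eqref{eq: equal} together with the already-established one-step merit decrease \eqref{eq: lower}, so that the argument becomes a short algebraic manipulation rather than a fresh convergence analysis. First I would apply \eqref{eq: equal} at the two relevant points: evaluating the merit function with coinciding arguments collapses both the $L_1$ penalty and the proximal term, giving $\Phi^{(z,z)}(z) = \sum_{i=1}^N f_i(z)$ and, identically, $\Phi^{(y^+,y^+)}(y^+) = \sum_{i=1}^N f_i(y^+)$. This rewrites the left-hand side of \eqref{eq: global decent} purely in terms of the original objectives.

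Next I would compute the mixed quantity $\Phi^{(z,y^+)}(y^+)$ directly from the definition \eqref{eq: Merit}. Because the evaluation point equals the lower global variable $y^+$, the consensus residual vanishes, $\|y^+ - y^+\|_1 = 0$, so the entire $L_1$ merit term drops out; what survives is $\sum_{i=1}^N F_i^{z}(y^+) = \sum_{i=1}^N f_i(y^+) + \frac{\gamma}{2}\sum_{i=1}^N \|y^+ - z\|^2$. Combining this with the previous step yields the key identity
\begin{equation*}
\Phi^{(z,y^+)}(y^+) = \Phi^{(y^+,y^+)}(y^+) + \frac{\gamma N}{2}\|y^+ - z\|^2,
\end{equation*}
which isolates the proximal gap as the \emph{only} discrepancy between the two upper arguments $z$ and $y^+$.

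Finally I would invoke \eqref{eq: lower}, the one-step decrease of the lower merit function guaranteed by Theorems \ref{theorem: decrease1} and \ref{theorem: decrease}, namely $\Phi^{(z,y^+)}(y^+) < \Phi^{(z,z)}(z)$. Substituting the key identity and rearranging gives
\begin{equation*}
\Phi^{(z,z)}(z) - \Phi^{(y^+,y^+)}(y^+) = \bigl(\Phi^{(z,z)}(z) - \Phi^{(z,y^+)}(y^+)\bigr) + \frac{\gamma N}{2}\|y^+ - z\|^2 > \frac{\gamma N}{2}\|y^+ - z\|^2,
\end{equation*}
since the bracketed term is strictly positive by \eqref{eq: lower}. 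The rightmost strict positivity then follows from $\gamma > 0$ and the hypothesis $y^+ \neq z$.

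The main obstacle is purely bookkeeping: keeping track of which of the two upper arguments governs the proximal center in each occurrence of $\Phi$, and ensuring the one-step decrease \eqref{eq: lower} is applied with the correct superscript $z$ fixed throughout the lower level. The conceptual crux — and the reason the bi-level globalization works — is the observation that at a consensus point the $L_1$ term is identically zero, so the entire improvement accumulated by the lower level from $z$ to $y^+$ is absorbed into the proximal gap $\tfrac{\gamma N}{2}\|y^+ - z\|^2$; this is precisely what converts a lower-level merit-function decrease into a genuine decrease of $\sum_i f_i$ measured at the upper level.
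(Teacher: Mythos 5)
Your proof is correct and takes essentially the same route as the paper's: both collapse $\Phi^{(z,z)}(z)$ and $\Phi^{(y^+,y^+)}(y^+)$ via \eqref{eq: equal}, expand $\Phi^{(z,y^+)}(y^+)$ from the definition \eqref{eq: Merit} so that only the proximal gap $\frac{\gamma N}{2}\|y^+-z\|^2$ survives, and then apply the lower-level decrease \eqref{eq: lower}. Your explicit intermediate identity $\Phi^{(z,y^+)}(y^+) = \Phi^{(y^+,y^+)}(y^+) + \frac{\gamma N}{2}\|y^+-z\|^2$ is merely a cleaner statement of the paper's final step of ``adding $\frac{\gamma N}{2}\|y^+-y^+\|^2$'' to the right-hand side of its chained inequality.
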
 
		\begin{proof}
			See Appendix \ref{APP: global convergence}.	\hfill$\blacksquare$
		\end{proof}


		By setting $z^+=y^+$, Equation \eqref{eq: global decent} implies
		\begin{equation}\label{eq: merit decrease}
			\Phi^{(z^k, z^k)}(z^k)<\Phi^{(z^0, z^0)}(z^0) - \frac{\gamma N}{2} \sum_{j=0}^{k-1}\|z^j - z^{j+1} \|^2,
		\end{equation}
		where $k$ is the iteration index of the upper level.
		Since $\sum_{i=1}^N f_i(x_i)$ is bounded below, 
		with the proposed globalization routine,  the following limit always exists
		\[\Phi^{(z^*,z^*)}(z^*) = \lim_{k\rightarrow \infty} \Phi^{(z^k, z^k)}(z^k)>-\infty.\]
		Here $z^*$ donates the limit point of $\Phi^{(\cdot,\cdot)}(\cdot)$.
		By taking the limit of Equation \eqref{eq: merit decrease} on both sides, we have 
		\[\Phi^{(z^*,z^*)}(z^*) < \Phi^{(z^0, z^0)}(z^0) - \frac{\gamma N}{2} \sum_{j=0}^{\infty}\|z^j - z^{j+1} \|^2  \]
		which implies 
		\begin{equation}\label{eq: infity}
			\begin{split}
				&\sum_{j=0}^{\infty}\|z^j - z^{j+1} \|^2 \\
				<& \frac{2}{\gamma N}\left( \Phi^{(z^0, z^0)}(z^0)-\Phi^{(z^*,z^*)}(z^*)\right).
			\end{split}
		\end{equation}
		Notice that,
		the left hand side of Inequality \eqref{eq: infity} must be bounded such that 
		\[ \lim_{j\rightarrow \infty}\|z^j - z^{j+1} \|^2=0.\]
		With the above analysis, 
		\[ \|z^j - z^{j+1} \|^2\leq \epsilon \]
		holds for sufficiently small $\epsilon>0$ with finite index $j$ for the upper level of Algorithm \ref{alg: Globalization of ALADIN} and \ref{alg: Globalization of ADMM}.
		Moreover, with the definition of merit function \eqref{eq: Merit}, the limit point of $\Phi^{(\cdot,\cdot)}(\cdot)$ is, at least, a \emph{critical point} of Problem \eqref{eq: DOPT_C}.
		To test whether the critical point is a local optima is trivial. By applying a perturbation $\varsigma\in \mathbb{R}^n \rightarrow 0$ to $z^*$, serving as a new initial point, Algorithm \ref{alg: Globalization of ALADIN} and \ref{alg: Globalization of ADMM} restart iterating until they converge to $z^*$ again, which means that $z^*$ is a \emph{local minimizer}. Otherwise,
		a new \emph{critical point} will be found, which indicates that $z^*$ is a saddle point. Note that the above global convergence analysis is also applicable to general ADMM and T-ALADIN  which is not shown here.
		
		Note that,	for convex cases, the proposed globalization strategy is useless. For non-convex cases,  
		the proposed globalization strategy can not guarantee to find a global minimizer of the  non-convex distributed problem \eqref{eq: DOPT_C} but can, at least, find a local optimizer.

		\begin{remark}
			In Algorithm \ref{alg: Globalization of ALADIN} and \ref{alg: Globalization of ADMM}, 
			due to the strongly convexity of Problem \eqref{eq: DOPT_C2}, there exists a parameter $\delta(z^k)>0$ (with fixed $z^k$) such that the lower global variable $y$ converges to the global optimizer of Problem \eqref{eq: DOPT_C2} with linear rate $\left(\frac{1}{\sqrt{1+\delta(z^k)}}\right)$ \cite{DLM,Du2023}.
			Assume that  there exists a $\bar \delta\geq \delta(z^k)\;\forall z^k$,  then, with our proposed bi-level globalization strategy,  the upper global variable $z$ will, at least, globally converge to a local optimizer of Problem \eqref{eq: DOPT_C} with linear convergence rate $\left(\frac{1}{\sqrt{1+\bar \delta}}\right)$.
		\end{remark}
		
		\begin{remark}
			If we skip the local update of Algorithm \ref{alg: Globalization of ALADIN} and update the lower global variable $y$ in a decentralized fashion, then it relates to another algorithm, called a consensus version of \emph{decentralized bi-level SQP} \cite{stomberg2022decentralized}. In this mentioned bi-level structure, our proposed globalization strategy is redundant for convex cases. However, the proposed globalization strategy still helps to establish the global convergence theory for non-convex cases. 
		\end{remark}
		\section{Conclusion}\label{sec: conclusion}
		In this paper, a novel bi-level globalization strategy of C-ADMM and C-ALADIN for non-convex problems is proposed.
		Without the help of the Lyapunov function or the corresponding Lagrangian function, we establish the global convergence theory for non-convex problems based on the state-of-the-art convex analysis of C-ADMM and C-ALADIN.
		Future work will concentrate on 
		non-convex distributed consensus optimization with constrained sub-problems. Decentralized variation of the proposed globalization strategy will also be considered.

		\section{Acknowledgments}
		We are grateful to Boris Houska, Bingsheng He, Shijie Zhu,  Alexander Engelmann, Kai Wang, Yuning Jiang and Xiaojun Yuan for their helpful discussion.
		\appendices

		\section{Comments on Local Variable Update in Section~\ref{eq: Conservative Operation} }
		\label{app: cmt}
		
		Other approximation techniques  can be also implemented here. For C-ALADIN, a linearization of the lower objective $f_i(x_i)$ is shown as Equation \eqref{eq: local update0}.
		\begin{equation}\label{eq: local update0}
			\begin{split}
				{x_i}^+
				\approx& \mathop{\arg\min}_{x_i} \partial f_i(y)^\top(x_i-y)+     \frac{\gamma}{2} \|x_i  -  z\|^2   \\
				&   +\lambda_i^\top  x_i+\frac{1}{2}\|x_i-y\|^2_{B_i}\\
				=
				&	\left( B_i+\gamma I \right)^{-1}\left(\gamma z+B_i y - \lambda_i -\partial f_i(y)  \right).
			\end{split}
		\end{equation}
		The local primal variable update can be also designed as a multiple iteration version, as shown in Equation \eqref{eq: multi1} where $x_i$ can be updated within finite iterations
		\begin{equation}\label{eq: multi1}
			\left\{
			\begin{split}
				x_i^- &= x_i^+ \quad\text{(take $x_i^-=y$ as an initial)};\\
				x_i^+ &= \left(B_i+\gamma I \right)^{-1}\left( \gamma z +B_i y-\lambda_{i} -\partial f_i(x_i^-) \right).
			\end{split}\right.
		\end{equation}
		
		For C-ADMM,
		by setting $B_i = \rho I$ and with $\rho >0$, Equation \eqref{eq: local update0} boils down to \eqref{eq: local update2} and has a similar expression as \eqref{eq: multi2}
		\begin{equation}\label{eq: local update2}
			\begin{split}
				{x_i}^+=		\frac{1}{\gamma+\rho}\left(\gamma z+\rho y - \lambda_i -\partial f_i(y)  \right).
			\end{split}
		\end{equation}
		Similar as \eqref{eq: multi1},  \eqref{eq: local update2} can be also replaced by \eqref{eq: multi2}
		\begin{equation}\label{eq: multi2}
			\left\{
			\begin{split}
				x_i^- &= x_i^+
				\quad\text{(take $x_i^-=y$ as an initial)};\\
				x_i^+ &= \frac{1}{\gamma+\rho} \left(\gamma z+\rho y -\lambda_{i}-\partial f_i(x_i^-) \right)
			\end{split}\right.
		\end{equation}
		where $x_i$ is updated with finite iterations.
		A similar idea has been proposed in \cite{DLM} and shows a linear convergence rate of strongly convex ADMM. Moreover, a quadratic approximation method can be also considered \cite{mokhtari2016dqm}.
		
		\section{Proof of Lemma \ref{Lemma: directional derivative}}\label{app: Proof of Lemma1}
		The following prove inherits from \cite[Theorem 18.2]{Nocedal2006}.
		Assume that the merit function with a increment has the following linear representation
		\begin{equation}\label{eq: linear}
			\begin{split}
				&\Phi^{( z, y^+)}(x^++t\Delta\tilde x)\\
				= & \sum_{i=1}^{N} F_i^{ z}(x_i^++t\Delta\tilde x_i)+ \sum_{i=1}^{N}\sigma_i\| x_i^+-y^+ +t\Delta\tilde x_i \|_1\\
				=& \sum_{i=1}^{N} F^{ z}_i(x_i^+)+t \sum_{i=1}^{N} \nabla F^{ z}_i(x_i^+)^\top \Delta\tilde x_i\\
				&+ \sum_{i=1}^{N}\sigma_i\| x_i^+ -y^+ -t(x_i^+ -y^+)\|_1   +O(t^2).
			\end{split}
		\end{equation}
		By isolating the first order block with respect to $t$, Equation \eqref{eq: linear} can be reformulated  as follows
		\begin{equation}
			\begin{split}
				&\Phi^{( z, y^+)}(x^++t\Delta\tilde x)\\
				=	& t \left\{ \sum_{i=1}^{N} \nabla F^{ z}_i(x_i^+)^\top \Delta\tilde x_i- \sum_{i=1}^{N}\sigma_i\| \Delta\tilde x_i \|_1  
				\right\}\\
				&+\underbrace{\left\{\sum_{i=1}^{N} F^{ z}_i(x_i^+)   + \sum_{i=1}^{N}\sigma_i\| x_i^+ -y^+ \|_1 \right\}}_{	\Phi^{(z, y^+)}(x^+)}
				+O(t^2).
				\\
			\end{split}
		\end{equation}
		
		Then, from the definition of directional derivative \eqref{eq: directional derivative1}, we have the resultant derivation as
		\begin{equation}\label{eq: directional derivative}
			\begin{split}
				&\text{D} \Phi^{( z, y^+)}(x^+)[\Delta\tilde x] \\
				=&  \lim_{t\rightarrow 0 ^+}\frac{	\Phi^{( z, y^+)}(x^++t\Delta\tilde x)-\Phi^{( z, y^+)}(x^+)}{t}\\
				=&\sum_{i=1}^{N} \nabla F^{ z}_i(x_i^+)^\top \Delta\tilde x_i- \sum_{i=1}^{N}\sigma_i\| \Delta\tilde x_i\|_1 +O(t) .
			\end{split}
		\end{equation}
		This completes the proof.

		\section{Proof of Theorem \ref{theorem: decrease1}}\label{app: theorem1}

		First, we have a first order Taylor expansion of Equation \eqref{eq: local update},
		\begin{equation}\label{eq: NLP approx}
			\begin{split}
				&\mathop{\min}_{x_i} F^{ z}_i(x_i)+\lambda_i^\top  x_i+\frac{1}{2}\|x_i-y\|^2_{B_i}\\
				\approx &  \nabla F^{ z}_i(y)^\top\Delta x_i +\lambda_i^\top(y+\Delta x_i)  +\frac{1}{2}\|\Delta x_i\|^2_{B_i}.
			\end{split}
		\end{equation}
		Clearly, the first order optimality condition of Equation \eqref{eq: NLP approx} shows as 
		\begin{equation}\label{eq: equality}
			\begin{split}
				& \nabla F^{ z}_i(y)  +\lambda_i  +B_i  \Delta x_i=0.
			\end{split}
		\end{equation}
		By multiplying $\Delta x_i$ to the left hand side of \eqref{eq: equality} and then performing summation over the agent index, we get the following equality
		\begin{equation}\label{eq: NLP inequality}
			\begin{split}
				&\sum_{i=1}^{N} \nabla F^{ z}_i(y)^\top \Delta x_i
				=  \sum_{i=1}^{N} \left(-\lambda_i^\top \Delta x_i- \|\Delta x_i\|^2_{B_i}\right).\\
			\end{split}
		\end{equation}
		With Equation \eqref{eq: directional derivative2} and \eqref{eq: NLP inequality},
		if  \eqref{eq: assumption} holds, then \eqref{eq: purpose} is guaranteed, which completes our proof.

		\section{Proof of Theorem \ref{theorem: decrease}}\label{app: Theorem2}
		
		From the first order optimal condition of Equation~\eqref{eq: consensus QP1}  with respect to $\Delta\tilde{ x}_i$, the follow equality always holds.
		\begin{equation}
			B_i \Delta\tilde{ x}_i + \lambda_i^+ + \nabla F^{ z}_i(x_i^+) = 0.
		\end{equation}
		We then multiply $\Delta\tilde{ x}_i$ to the left hand side and get 
		\begin{equation}
			\Delta\tilde{ x}_i ^\top B_i \Delta\tilde{ x}_i + (\lambda_i^+) ^\top \Delta\tilde{ x}_i +\nabla F^{ z}_i(x_i^+) ^\top \Delta\tilde{ x}_i =0.
		\end{equation}
		This implies 
		\begin{equation}\label{eq: global inequality}
			\begin{split}
				&\sum_{i=1}^{N} \nabla F^{ z}_i(x_i^+)^\top \Delta\tilde{ x}_i\\
				=& - \sum_{i=1}^{N}\left ( \Delta\tilde{ x}_i^\top B_i \Delta\tilde{ x}_i   +  (\lambda_i^+) ^\top \Delta\tilde{ x}_i \right)\\
				\leq&   \sum_{i=1}^{N}\|\lambda_i^+\|_\infty \| \Delta\tilde{x}_i  \|_1- \sum_{i=1}^{N} \|\Delta\tilde{ x}_i\|_{B_i}^2.
			\end{split}
		\end{equation}
		With Equation \eqref{eq: condition} and $B_i\succ0$, once we plug  \eqref{eq: global inequality} into \eqref{eq: directional derivative0},
		Equation \eqref{eq: consensus descent} holds. This completes the proof.


		\section{Proof of Theorem \ref{the: lya}}\label{app: lya}
		The following proof is similar to that of \cite[Appendix C]{Du2023}.

		We then show that, by proving a non-positive equation \eqref{eq: equivalent}, Equation \eqref{eq: lya} is monotonically decreasing. Specifically,
		\begin{equation}\label{eq: equivalent}
			4\sum_{i=1}^{N} \left( \lambda_{i} - \lambda_i^* +B_i (x_i^+-y) \right)^\top \left( x_i^+-y^* \right)
		\end{equation}
		is equivalent to 
		\[	\mathcal L(y^+,\lambda^+)-\mathcal L(y,\lambda).\]

		
		Equation \eqref{eq: equivalent} can also be expressed as 
		\begin{equation}\label{eq: middle}
			\begin{split}
				&4\sum_{i=1}^{N} \left( \lambda_{i} - \lambda_i^* +B_i (x_i^+-y) \right)^\top \left( x_i^+-y^* \right)\\
				=&4\sum_{i=1}^{N}\left( \lambda_{i} - \lambda_i^* \right)^\top \left( x_i^+-y^* \right)\\
				& +4\sum_{i=1}^{N}\left( B_i (x_i^+-y)\right)^\top \left( x_i^+-y^* \right).
			\end{split}
		\end{equation}
		Notice that from the KKT optimality condition of Equation \eqref{eq: DOPT_C2}, we have
		\begin{equation}\label{eq: lam}
			\sum_{i=1}^{N} \lambda_{i} =0.
		\end{equation}
		By applying \eqref{eq: lam},  
		Equation \eqref{eq: middle} becomes as
		\begin{equation}\label{eq: middle2}
			\begin{split}
				&4\sum_{i=1}^{N}\left( \lambda_{i} - \lambda_i^* \right)^\top \left( x_i^+-y \right)\\
				& +4\sum_{i=1}^{N}\left( B_i (x_i^+-y)\right)^\top \left( x_i^+-y^* \right)\\
				=& 4\sum_{i=1}^{N}\left( \lambda_{i} - \lambda_i^* +B_i\left( x_i^+-y^* \right) \right)^\top\left( x_i^+-y \right).
			\end{split}
		\end{equation}
		In the end,	by introducing an important formula appeared in \cite{Du2023} 
		\[ x_i^+ = \frac{B_i^{-1}}{2} \left( \lambda_{i}^+-\lambda_{i}\right) + \frac{1}{2} \left(  y+y^+ \right), \]
		we simplify Equation \eqref{eq: middle2} as
		\begin{equation}
			\begin{split}
				&4\sum_{i=1}^{N}\left( \lambda_{i} - \lambda_i^* +B_i\left( x_i^+-y^* \right) \right)^\top\left( x_i^+-y \right)\\
				=&  \sum_{i=1}^{N} \| \lambda_{i}^+-\lambda_{i}^*\|^2_{B_i^{-1}}- \sum_{i=1}^{N} \| \lambda_{i}-\lambda_{i}^*\|^2_{B_i^{-1}}\\
				&+ \sum_{i=1}^{N} \| y^+-y^*\|^2_{B_i}
				- \sum_{i=1}^{N} \| y-y^*\|^2_{B_i}\\
				=& 	\mathcal L(y^+,\lambda^+)-\mathcal L(y,\lambda).
			\end{split}
		\end{equation}
		As we mentioned, Equation \eqref{eq: equivalent} is non-positive, therefore
		\[ 	\mathcal L(y^+,\lambda^+)-\mathcal L(y,\lambda)\leq 0 \]
		is guaranteed. This completes our proof.
		
		\section{Proof of Theorem \ref{theorem: global convergence}} \label{APP: global convergence}
		In the lower level, Problem \eqref{eq: DOPT_C2} of  both algorithms is strongly convex. Algorithm~\ref{alg: Globalization of ALADIN} and \ref{alg: Globalization of ADMM} will guarantee that, with finite iteration of the lower level, a better minimizer $y^{+}$ can be found such that
		\begin{equation}\label{eq: consensus QP}
			\begin{split}
				&	\sum_{i=1}^{N} f_i( z) \\=&\underbrace{ \sum_{i=1}^{N} \left(f_i( z) + \frac{\gamma}{2} \| z- z\|^2\right)+\sum_{i=1}^{N}\sigma_i \| z -z  \|_1}_{\overset{\eqref{eq: Merit}}{=}\Phi^{( z,{z})}(z)} \\
				\overset{\eqref{eq: lower}}{>}
				&\underbrace{\sum_{i=1}^{N}\left( f_i(y^+) +\frac{\gamma}{2} \|  y^+- z \|^2\right)+\sum_{i=1}^{N}\sigma_i \| y^+ -y^+  \|_1}_{\overset{\eqref{eq: Merit}}{=}\Phi^{(z,y^+)}(y^+)}.
			\end{split}
		\end{equation}
		By adding $\frac{\gamma N}{2} \|  y^+- y^+ \|^2$ on the right hand side of \eqref{eq: consensus QP}, we can easily show
		\begin{equation*}
			\Phi^{({z},{z})}( z) -\Phi^{(y^+,y^+)}(y^+) >   \frac{\gamma N}{2} \|  y^+- z \|^2.
		\end{equation*}
		This completes the proof.

		\bibliographystyle{unsrt}
		\bibliography{paper}
		\balance

	\end{document}